\newcommand{\norm}[1]{\|#1\|}
\renewcommand{\L}{\mathcal{L}}
\newcommand{\R}{\mathbb{R}}
\newcommand{\T}{\mathbb{T}}
\renewcommand{\H}{\mathcal{H}}
\renewcommand{\div}{\mbox{div}}
\newcommand{\grad}{\nabla}
\newcommand{\Lap}{\Delta}
\newcommand{\A}{\mbox{\AA}}
\newcommand{\scrA}{\mathcal{A}}
\newcommand{\calO}{\mathcal{O}}
\renewcommand{\P}{\mathbb{P}}
\newcommand{\Lag}{\text{Lag}}
\newcommand{\Herm}{\text{Herm}}
\newcommand{\obar}[1]{\mkern 1.5mu\overline{\mkern-1.5mu#1\mkern-1.5mu}\mkern 1.5mu}
\newcommand{\vep}{\varepsilon}
\DeclareMathOperator{\im}{Im}
\DeclareMathOperator{\spanset}{span}
\newtheorem{theorem}{Theorem}
\newtheorem{assumption}{Assumption}
\newtheorem{corollary}[theorem]{Corollary}
\begin{document}

\title{Gevrey Regularity for a Fluid-Structure Interaction Model}

\author{George Avalos}
\address{University of Nebraska-Lincoln \\
Lincoln, NE 68588, USA \\
gavalos2@unl.edu}
\thanks{Author 1 partially supported by NSF Grant DMS-1948942}
\author{Dylan McKnight}
\address{Colorado Mesa University \\
Grand Junction, CO 81501, USA \\ dmcknight@coloradomesa.edu}
\author{Sara McKnight}
\address{Colorado Mesa University \\
Grand Junction, CO 81501, USA \\ smcknight@coloradomesa.edu}

\begin{abstract}
A result of Gevrey regularity is ascertained for a semigroup which models a fluid-structure interaction problem. In this model, the fluid evolves in a piecewise smooth or convex geometry $\calO$. On a portion of the boundary, a fourth order plate equation is coupled with the fluid through pressure and matching velocities. The key to obtaining the conclusion of Gevrey regularity is an appropriate estimation of the resolvent of the associated $C_0$-semigroup operator. Moreover, a numerical scheme and example is provided which empirically demonstrates smoothing of the fluid-structure semigroup.
\end{abstract}

\maketitle

\null

\noindent
Keywords: Fluid-structure interaction \and Stokes equations \and Kirchoff plate \and Gevrey class semigroups

\null

\noindent
2020 AMS MSC:35Q35, 35Q74, 74F10, 35B65

\section{Introduction}
During the course of communication between the authors of \cite{AC14} and the late Igor Chueshov, the latter posed the conjecture that the $C_0$-semigroup $\{e^{\scrA t}\}_{\mathcal{L}(\H)}$ -- formulated below -- which models the fluid-structure interaction (FSI) \eqref{FSI} below, besides manifesting exponential decay -- see \cite{AB14} -- is actually analytic in an appropriate sector of the complex plane. Generally, couplings of hyperbolic and parabolic dynamics which evolve on distinct geometries and interact strictly accross a boundary interface are not at all analytic or analytic-like; see \cite{AT07}, \cite{BG07}, \cite{BT15}, \cite{CD05} (and references therein).

However, unlike the FSI systems in said references, the PDE components in FSI \eqref{FSI} below do \emph{not} share the same dimensionality: in particular, in \eqref{FSI}, the three dimensional incompressible flow ``immerses" the two dimensional plate dynamics. Accordingly, it would seem reasonable to attempt to acertain the Gevrey regularity of the associated FSI semigroup. Recall that a strongly continuous semigroup $S(t)$ on a Banach space $B$ is of Gevrey class $\delta$ (where $\delta > 1)$ if $S(t)$ is infinitely differentiable on $(t_0,\infty),$ for some $t_0 > 0,$ and for every compact $K \in (t_0,\infty)$ and for any $\theta > 0,$ there exists a constant $C > 0$ such that
$$\norm{S^{(n)}(t)}_{\mathcal{L}(B)} \leq C\theta^n (n!)^\delta,$$ for all $k \in K$ and $n \in \mathbb{N}$ (see \cite{T89}, p. 143). Roughly speaking, Gevrey regularity for a strongly continuous semigroup is a state which lies between differentiability and analyticity.

Our main result of Gevrey regularity, in Theorem \ref{thm:mainResult}, is similar to that obtained in \cite{BT22-2}, \cite{BT22-1}, which generally deal with 2D-1D FSI with moving boundary interface. Therein, the necessary Gevrey (resolvent) estimates for their fixed domain models are obtained for two dimensional fluids which are coupled to one dimensional beams, on specified geometrical configurations. On such special geometries, the authors in \cite{BT19} are able to clearly characterize the FSI generator resolvent, and subsequently estimate it.

In contrast, the present work deals with 3D-2D FSI on rough, convex domains -- see Assumption 1 below. Moreover, here there is no clear representation and estimation of the associated FSI resolvent operator (there cannot be, for arbitrary domains). Instead, our short proof uses in an essential way the underlying dissipation which arises from the fluid PDE component of the FSI dynamics.

\section{PDE Model}
Let $\mathcal{O}\subset\R^3$ be a bounded domain such that $\partial\mathcal{O}=\obar{\Omega}\cup\obar{S}$, with $\Omega\cap S=\emptyset$ and $\partial\Omega=\partial S$. Further, the boundary segement $\Omega\subseteq\{(x_1,x_2,0):x_1,x_2\in\R\}$ is flat, and surface $S$ is contained in $\{(x_1,x_2,x_3) \in \R^3: x_3 \leq 0\}.$ The outward unit vector on $\partial\calO$ is denoted $n(x)$; in particular, $n|_\Omega = \langle 0,0,1 \rangle.$ An example of such a geometry is in Figure \ref{geometry1}.

    \begin{center}
    \begin{figure}[htp!]
    \includegraphics[width=6cm]{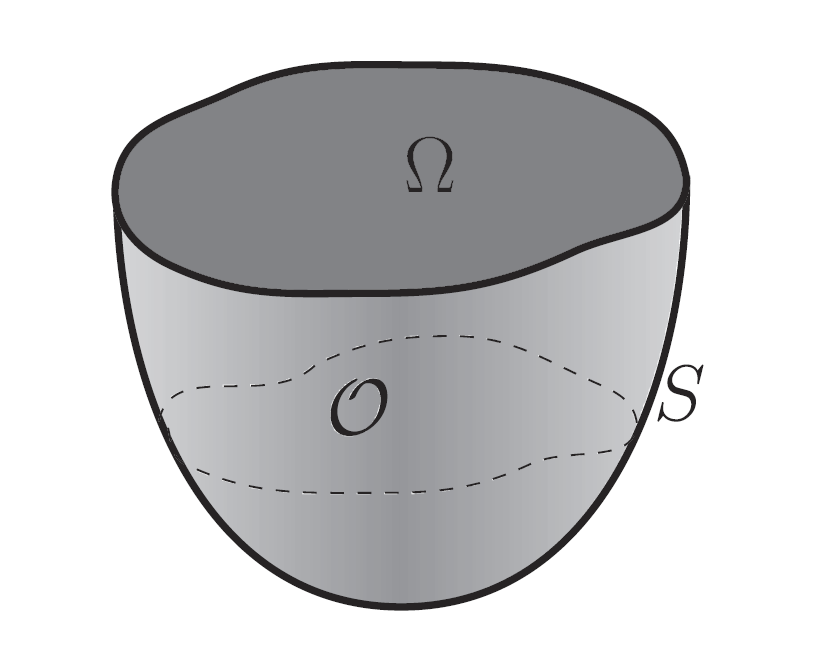}
    \caption{Flow Structure Geometry}
    \label{geometry1}
    \end{figure}
    \end{center}

\FloatBarrier    
Throughout, we will impose the following assumptions on the geometry (see \cite{CR13}):
\begin{assumption}
The pair $\{\calO,\Omega\}$ satisfies one of the following:
\begin{enumerate}
    \item[(G.1)] $\calO$ is convex with wedge angles at most $\frac{2\pi}{3}$, $\Omega$ has smooth boundary, and $S$ is a piecewise smooth surface.
    \item[(G.2)] $\calO$ is a convex polyhedron with angles at most $\frac{2\pi}{3}$, implying $\Omega$ is a convex polygon with angles at most $\frac{2\pi}{3}$.
\end{enumerate}
\end{assumption}
The geometry in Figure \ref{geometry1} satisfies (G.1), while the unit cube in $\R^3$, with $\Omega=\{(x_1,x_2,1):x_1,x_2\in(0,1)\}$, $S=\partial(0,1)^3\setminus\Omega$ is an example of a geometry satisfying $(G.2)$. As established in \cite{AB14}, such assumptions on the geometry allow us to recover the results of \cite{CR13}, in part via the regularity results in \cite{D89}. 

\subsection{The Underlying Abstract Framework}
On this geometry $\{\calO,\Omega\},$ we will consider the following fluid-structure interaction (FSI):

\begin{align}
    \begin{cases}
     w_{tt}+\A w = p|_{\Omega} \text{ in } \Omega\times(0,T)\\
    w=\frac{\partial w}{\partial n}=0 \text{ on }\partial\Omega\\
    u_t-\Lap u +\grad p = 0 \text{ in }\mathcal{O}\times(0,T)\\
    \div(u) = 0 \text{ in }\mathcal{O}\times(0,T)\\
    u=0\text{ on }S,~u=\begin{bmatrix}0,0,w_t\end{bmatrix}\text{ on }\Omega,\\
    [w(0),w_t(0),u(0)] = [w_1^0,w_2^0,u^0]\in\H,
    \end{cases}\label{FSI}
\end{align}

We start by writing the biharmonic operator with clamped boundary conditions:
\begin{center}
\begin{tabular}{l l}
    $\A:=\Lap^2$, &  $D(\A)=H^4(\Omega)\cap H_0^2(\Omega)$,\\
\end{tabular}
\end{center}
As defined, $\A:D(\A)\to L^2(\Omega)$ is a positive definite, self-adjoint operator, and so $\A^s$ is well-defined for all $s\in\R$. Note that in making this conclusion, we are implicitly using the higher regularity results of \cite{BR80} (see theorem 2, p. 563 therein) and the geometric Assumption 1 to conclude that $\A$ is an isomorphism of $L^2(\Omega)$ onto $H^4(\Omega)\cap H_0^2(\Omega).$

Moreover, we define the following Robin maps $R\in\L(H^{-1/2}(\Omega), H^{1}(\calO)),$ $\tilde{R}\in\L(H^{-1/2}(S), H^1(\calO))$:
\begin{align*}
    R(g)=f \mbox{ iff } \begin{cases}\Lap f=0 \mbox{ in } \mathcal{O}\\ \frac{\partial f}{\partial n}+f=g\mbox{ on }\Omega\\\frac{\partial f}{\partial n}=0\mbox{ on }S\end{cases}\\
\tilde{R}(g)=f \mbox{ iff } \begin{cases}\Lap f=0 \mbox{ in } \mathcal{O}\\ \frac{\partial f}{\partial n}+f=0\mbox{ on }\Omega\\\frac{\partial f}{\partial n}=g\mbox{ on }S\end{cases}
\end{align*}

\noindent Subsequently, we set
\begin{align}
    G_{1}(w_1)&=R(\A w_1), \label{g1def}\\
    G_{2}(u)&=R(\Lap u^{(3)}|_\Omega)+\tilde{R}(\Lap u\cdot n|_S).\label{g2def}
\end{align}
The associated finite energy space $\H$ is given by:
\begin{align}
    \mathcal{H} &= \left\{\begin{bmatrix}w_1\\w_2\\u\end{bmatrix}\in [H_0^2(\Omega)\cap\hat{L}^2(\Omega)]\times \hat{L}^2(\Omega)\times\mathcal{H}_f:u\cdot n|_S=0,~u=\begin{bmatrix}0\\0\\ w_2\end{bmatrix}\text{ on }\Omega\right\}.
\end{align}
 Here, $\mathcal{H}_f$ is
\begin{align*}
    \mathcal{H}_f:=\{u\in L^2(\mathcal{O}):\div(u)=0,~u\cdot n|_S=0\},
\end{align*}
and $$\hat{L}^2(\Omega) = \left\{g\in L^2(\Omega) : \int_\Omega g \, d\Omega = 0\right\}.$$
Throughout, we will use the following notation for inner products and norms:
\begin{align}
    \norm{w}_\Omega&=\norm{w}_{L^2(\Omega)},~\langle w,\tilde{w}\rangle_{\Omega}=(w,\tilde{w})_{L^2(\Omega)},\\
    \norm{u}_\calO&=\norm{u}_{L^2(\calO)},~~(u,\tilde{u})_{\calO}=(u,\tilde{u})_{L^2(\calO)}.
\end{align}
Given the characterization $D\left(\A^{\frac{1}{2}}\right)\cong H_0^2(\Omega)$, by Theorem 8.1 of \cite{G67}, we equip $\H$ with the inner product
\begin{align*}
    \langle \Phi, \tilde{\Phi}\rangle_{\H}=\left\langle\begin{bmatrix}w_1\\ w_2\\u\end{bmatrix}, \begin{bmatrix} \tilde{w}_1\\\tilde{w}_2\\\tilde{u}\end{bmatrix}\right\rangle_{\H}=\left\langle \A^{\frac{1}{2}}w_1, \A^{\frac{1}{2}}\tilde{w}_1\right\rangle_{\Omega}+\langle  w_2,  \tilde{w}_2\rangle_{\Omega} + (u, \tilde{u})_{\calO}.
\end{align*}

\noindent Following the procedure laid out in \cite{AC14}, it can be shown that the pressure $p$ of the FSI can be identified as the solution of the following elliptic boundary value problem:
\begin{align}
\begin{cases}
    \Lap p = 0\text{ in }\mathcal{O}\\
    \frac{\partial p}{\partial n} = \Lap u\cdot n|_\Omega \text{ on }S\\
    \frac{\partial p}{\partial n}+p=\A w+(\Lap u)^{(3)}|_\Omega\text{ on }\Omega
    \end{cases}\label{PBVP}
\end{align}

\noindent Given that $p$ solves the above boundary value problem, one has that 
\begin{equation}
p(t)=G_{1}(w(t))+G_{2}(u(t))\label{pressure}
\end{equation}
pointwise in time. Thus, FSI \eqref{FSI} can be written as the abstract evolution equation $\frac{d\Phi}{dt}=\mathcal{A} \Phi$, where
\begin{align}
    \mathcal{A} = \begin{bmatrix} 0&I&0\\ -\A+G_{1}|_{\Omega}&0&G_{ 2}|_\Omega\\
    -\grad G_{1}&0&\Lap-\grad G_{2}
    \end{bmatrix},~~\Phi=\begin{bmatrix}w\\w_t\\u\end{bmatrix},~~\text{and}~~ \Phi(0) = \Phi_0\label{GEN}.
\end{align}
Semigroup well-posedness of this abstract evolution equation was established in \cite{AC14}. Specifically, $\scrA$ is the infinitesimal generator of a $C_0-$semigroup of contractions on finite energy space $\H$. 
The domain of the generator $\scrA$ is given by
\begin{align}
    D(\mathcal{A})&=\left\{\begin{bmatrix}w_1\\w_2\\u\end{bmatrix}\in\mathcal{H}:w_1\in H^4(\Omega),~w_2\in H_0^2(\Omega), ~u\in H^2(\mathcal{O}), ~u=0\text{ on }S, ~u = [0,0,w_2]\text{ on }\Omega\right\}.\label{domain}
\end{align}

\begin{theorem}{(see Theorem 1.1 of \cite{AC14} and Remark 3.2 of \cite{AB14})} Under geometrical Assumption 1, $\scrA:D(\scrA)\subset\H \to \H,$ as defined in \eqref{GEN}-\eqref{domain}, generates a $C_0$-semigroup of contractions on $\H.$
\end{theorem}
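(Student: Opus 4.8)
The plan is to verify the hypotheses of the Lumer--Phillips theorem: $\scrA$ is densely defined on $\H$, dissipative, and $R(I-\scrA)=\H$. Density of $D(\scrA)$ in $\H$ follows from a routine mollification argument that respects the solenoidal constraint and the interface compatibility $u=[0,0,w_2]$ on $\Omega$. For dissipativity, fix $\Phi=[w_1,w_2,u]^{T}\in D(\scrA)$ and compute $\re\langle\scrA\Phi,\Phi\rangle_{\H}$ directly from \eqref{GEN}. Since $w_1\in D(\A)$ and $\A$ is self-adjoint, the term $\langle\A^{1/2}w_2,\A^{1/2}w_1\rangle_{\Omega}$ coming from the first slot of the inner product cancels the term $-\langle\A w_1,w_2\rangle_{\Omega}$ from the second slot, and by \eqref{pressure} the surviving contributions are $\re\langle p|_{\Omega},w_2\rangle_{\Omega}+\re(\Lap u-\grad p,u)_{\calO}$ with $p=G_1(w_1)+G_2(u)$. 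Integrating $(\Lap u,u)_{\calO}$ by parts and using $u|_{S}=0$, $u|_{\Omega}=[0,0,w_2]$ together with $\div u=0$ — which forces the tangential derivatives of $u^{(1)},u^{(2)}$, and hence $\partial_{x_3}u^{(3)}$, to vanish on $\Omega$, annihilating the boundary integral $\int_{\Omega}\partial_{n}u\cdot u$ — yields $(\Lap u,u)_{\calO}=-\norm{\grad u}_{\calO}^{2}$. Integrating $(\grad p,u)_{\calO}$ by parts and using $\div u=0$, $u\cdot n|_{S}=0$, $u\cdot n|_{\Omega}=w_2$ yields $(\grad p,u)_{\calO}=\langle p|_{\Omega},w_2\rangle_{\Omega}$, which cancels the remaining boundary pairing. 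Hence $\re\langle\scrA\Phi,\Phi\rangle_{\H}=-\norm{\grad u}_{\calO}^{2}\le 0$.

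For the range condition, given $[f_1,f_2,g]^{T}\in\H$ I would solve $(I-\scrA)[w_1,w_2,u]^{T}=[f_1,f_2,g]^{T}$. Eliminating $w_2=w_1-f_1$ reduces the system to the stationary coupled problem $(I+\A)w_1-p|_{\Omega}=f_1+f_2$ on $\Omega$ and $u-\Lap u+\grad p=g$, $\div u=0$ in $\calO$, subject to $u|_{S}=0$ and $u|_{\Omega}=[0,0,w_1-f_1]$. Testing the fluid equation against solenoidal fields that vanish on $S$ whose $\Omega$-trace has the form $[0,0,\psi]$ eliminates the pressure (the boundary term $(\grad p,\phi)_{\calO}=\langle p|_{\Omega},\psi\rangle_{\Omega}$ is exactly cancelled by the $-p|_{\Omega}$ term from the plate equation tested against $\psi$), and produces, on the closed subspace of $\H$ encoding the trace constraint, the bounded symmetric bilinear form carrying $\langle\A^{1/2}w_1,\A^{1/2}\psi\rangle_{\Omega}+\langle w_1,\psi\rangle_{\Omega}+(u,\phi)_{\calO}+(\grad u,\grad\phi)_{\calO}$. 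This form is coercive, since $\norm{\A^{1/2}\cdot}_{\Omega}$ is equivalent to the $H^{2}_{0}(\Omega)$ norm and $\norm{\grad\phi}_{\calO}$ dominates $\norm{\phi}_{H^{1}(\calO)}$ by Poincar\'e (as $\phi|_{S}=0$ on the positive-measure piece $S$ of $\partial\calO$). Lax--Milgram gives a unique weak solution $(w_1,u)$; de Rham's theorem recovers the pressure, which \eqref{pressure} identifies with $G_1(w_1)+G_2(u)$ via \eqref{g1def}--\eqref{g2def}, with the usual bookkeeping of mean-zero normalizations. It remains to see the solution lies in $D(\scrA)$: from $\A w_1=f_1+f_2-w_1+p|_{\Omega}\in L^{2}(\Omega)$ and the biharmonic isomorphism noted above, $w_1\in H^{4}(\Omega)\cap H^{2}_{0}(\Omega)$; and elliptic regularity for the Stokes system on $\calO$ gives $u\in H^{2}(\calO)$. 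Thus $R(I-\scrA)=\H$, and, $\scrA$ being densely defined and dissipative, Lumer--Phillips produces the asserted $C_0$-semigroup of contractions (closedness of $\scrA$ coming for free from the range condition).

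The main obstacle is the $H^{2}(\calO)$ regularity of the fluid component invoked at the end. On a general Lipschitz $\calO$, the Stokes problem with these mixed Dirichlet/velocity-interface conditions delivers only $H^{3/2+\vep}$, which is insufficient to place $u$ in $D(\scrA)$ (and far short of what the later Gevrey estimates will require). This is exactly where geometric Assumption 1 enters: convexity together with the wedge-angle restriction $\frac{2\pi}{3}$ furnishes full $H^{2}$ Stokes regularity via \cite{CR13}, \cite{D89} (and \cite{BR80} for the biharmonic piece), as recorded in \cite{AB14}. A subsidiary technical point is that the variational step must be posed on the constrained subspace of $\H$ — the plate and fluid unknowns being coupled both through the interface trace and through the pressure — with the pressure reconstructed a posteriori rather than carried as an independent unknown; this is the bookkeeping handled in \cite{AC14}.
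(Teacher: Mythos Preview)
The paper does not supply its own proof of this theorem: it is stated with the parenthetical ``(see Theorem~1.1 of \cite{AC14} and Remark~3.2 of \cite{AB14})'' and no argument is given in the body. Your Lumer--Phillips outline is the standard route taken in those references, and your sketch is correct in its essentials: the dissipativity computation matches the calculation the paper later carries out in the proof of Theorem~\ref{thm:mainResult} (indeed, you supply more detail than the paper does on why the boundary term $\int_\Omega \partial_n u\cdot \bar u$ vanishes), and you correctly isolate the one place where Assumption~1 is genuinely needed, namely the $H^2(\calO)$ Stokes regularity required to land the resolvent solution back in $D(\scrA)$.

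One small point worth tightening in the range-condition step: because the interface constraint on the unknown reads $u|_\Omega=[0,0,w_1-f_1]$ (not $[0,0,w_1]$), the trial and test spaces for your Lax--Milgram form do not coincide unless you first lift the inhomogeneous datum $-f_1$ by a divergence-free extension vanishing on $S$, and then pose the variational problem on the homogeneous constraint space; otherwise the symmetric form you wrote is not tested against the solution itself and coercivity does not directly apply. This is exactly the ``bookkeeping handled in \cite{AC14}'' you allude to, so the gap is cosmetic rather than substantive.
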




\section{Gevrey Class}
\noindent By Gevrey class, we mean the following estimate on the resolvent of $\scrA$ along some vertical line in the right half complex plane.

\begin{theorem}[Theorem 4 of \cite{T89}]
Let $S(t)$ be a $C_0$-semigroup generated by $\mathcal{A}$ satisfying $\norm{S(t)}\leq M e^{\omega t}$. Suppose that for some $\mu\geq\omega$ and $\alpha\in(0,1]$,
\begin{align}
    \limsup_{|\beta|\to\infty}|\beta|^\alpha\norm{R(\mu+i\beta;\mathcal{A})}=C<\infty.
\end{align}
then $S(t)$ is of Gevrey class $\delta$ for $t>0$ for each $\delta>\alpha^{-1}$. 
\end{theorem}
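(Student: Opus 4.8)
The plan is to write the semigroup, and all of its derivatives at once, as a Dunford-type integral along a contour $\gamma$ that peels away from the vertical line $\re\lambda=\mu$ and bends into the left half-plane along the parabola $\re\lambda=\mu-c|\im\lambda|^{\alpha}$, and then to extract the factorial growth from a $\Gamma$-function estimate. It costs nothing to assume $\mu>\omega$, since replacing $\mu$ by a slightly larger value affects neither the hypothesis (a statement about $|\beta|\to\infty$) nor the conclusion. \emph{First step, from a line bound to a region bound:} pick $\beta_0$ with $\norm{R(\mu+i\beta;\mathcal{A})}\leq 2C|\beta|^{-\alpha}$ for $|\beta|\geq\beta_0$; the elementary fact that $\lambda_0\in\rho(\mathcal{A})$ forces the disk $\{|\lambda-\lambda_0|<\norm{R(\lambda_0;\mathcal{A})}^{-1}\}$ into $\rho(\mathcal{A})$ with $\norm{R(\lambda;\mathcal{A})}\leq 2\norm{R(\lambda_0;\mathcal{A})}$ on its half-radius subdisk, applied at $\lambda_0=\mu+i\beta$, produces a small $c>0$ for which $\mathcal{A}$ has no spectrum on, and $\norm{R(\lambda;\mathcal{A})}\leq C'(1+|\im\lambda|)^{-\alpha}$ throughout, the region $\{\re\lambda\geq\mu-c|\im\lambda|^{\alpha}\}$ (for $|\im\lambda|\leq\beta_0$ one simply invokes Hille--Yosida, since there $\re\lambda>\omega$). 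In particular the curve $\gamma(\beta):=\mu-c|\beta|^{\alpha}+i\beta$, $\beta\in\R$, stays in $\rho(\mathcal{A})$ with $\norm{R(\gamma(\beta);\mathcal{A})}\leq C'(1+|\beta|)^{-\alpha}$.

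\emph{Second step, the contour representation.} For $t>0$ and $n\geq0$ set $T_n(t):=\frac{1}{2\pi i}\int_\gamma \lambda^{n} e^{\lambda t}R(\lambda;\mathcal{A})\,d\lambda$, the contour oriented upward. On $\gamma$ one has $|e^{\lambda t}|=e^{\mu t}e^{-ct|\beta|^{\alpha}}$, $|\lambda|^{n}\leq C_1^{\,n}(1+|\beta|)^{n}$ and $|d\lambda|\leq C_1(1+|\beta|^{\alpha-1})\,d\beta$, so, since $\alpha\in(0,1]$ and $t>0$, the integrand is dominated by a function in $L^1(\R,d\beta)$ (integrable both at $\beta=0$ and at $\beta=\pm\infty$). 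Hence $T_n(t)\in\L(B)$, the integral converging in operator norm, locally uniformly in $t$. Standard manipulations then identify these operators: deforming $\gamma$ back to the line $\re\lambda=\mu$ via Cauchy's theorem — the horizontal connecting segments at imaginary height $\pm R$ being annihilated in the limit through the identity $R(\lambda;\mathcal{A})x=\lambda^{-1}x+\lambda^{-1}R(\lambda;\mathcal{A})\mathcal{A}x$ valid on $D(\mathcal{A})$, then extending by density — shows $T_0(t)=S(t)$; differentiating under the integral sign, legitimate by the same domination, gives $\frac{d}{dt}T_n(t)=T_{n+1}(t)$. Thus $S(\cdot)$ is infinitely $\L(B)$-differentiable on $(0,\infty)$ with $S^{(n)}(t)=T_n(t)$.

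\emph{Third step, the $\Gamma$-function estimate.} Since $n=0$ is the given bound $\norm{S(t)}\leq Me^{\omega t}$, take $n\geq1$. Splitting the integral at $|\beta|=\beta_0$, the piece over $|\beta|\leq\beta_0$ contributes at most $DM_0^{\,n}$ for constants depending only on $\mu,c,\beta_0$ and $\sup K$, while the piece over $|\beta|\geq\beta_0$ is at most a constant multiple of $e^{\mu t}\int_{\beta_0}^{\infty}\beta^{n-\alpha}e^{-ct\beta^{\alpha}}\,d\beta$; the substitution $s=ct\beta^{\alpha}$ converts this into $\tfrac1\alpha(ct)^{-(n+1-\alpha)/\alpha}\int_{s_0}^{\infty}s^{(n+1)/\alpha-2}e^{-s}\,ds\leq\tfrac1\alpha(ct)^{-(n+1-\alpha)/\alpha}\,\Gamma\!\big(\tfrac{n+1}{\alpha}-1\big)$. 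Absorbing the $t$-dependence over a compact $K\subset(0,\infty)$ yields $\norm{S^{(n)}(t)}_{\L(B)}\leq D_1 D_2^{\,n}\,\Gamma\!\big(\tfrac{n+1}{\alpha}-1\big)$ with $D_1,D_2$ depending only on $K$. By Stirling, $\Gamma\!\big(\tfrac{n+1}{\alpha}-1\big)=n^{\,n/\alpha}e^{-n/\alpha}\alpha^{-n/\alpha}\,n^{O(1)}$ whereas $(n!)^{\delta}=n^{\,n\delta}e^{-n\delta}\,n^{O(1)}$; because $\delta>\alpha^{-1}$, the factor $n^{\,n(1/\alpha-\delta)}$ decays super-exponentially, so for any prescribed $\theta>0$ the sequence $D_2^{\,n}\Gamma\!\big(\tfrac{n+1}{\alpha}-1\big)/(\theta^{n}(n!)^{\delta})$ — and likewise $DM_0^{\,n}/(\theta^{n}(n!)^{\delta})$ — is bounded. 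Taking the supremum over $n$ gives a constant $\tilde C=\tilde C(K,\theta)$ with $\norm{S^{(n)}(t)}_{\L(B)}\leq\tilde C\,\theta^{n}(n!)^{\delta}$ for all $t\in K$, $n\in\N$, which is precisely the assertion that $S(t)$ is of Gevrey class $\delta$ for $t>0$.

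\emph{Expected obstacle.} The delicate point is the second step. A priori $S(t)$ need not be $\L(B)$-differentiable at all, so one must genuinely establish both the operator-norm convergence of the parabolic integral and that it reproduces $S^{(n)}(t)$; and within that, deforming the contour from the vertical line to $\gamma$ is subtle because, controlled only by $\norm{R(\lambda;\mathcal{A})}$, the horizontal connecting segments do not vanish — one is forced to use the extra $\lambda^{-1}$-decay of $R(\lambda;\mathcal{A})x$ on $D(\mathcal{A})$ (and one further power on $D(\mathcal{A}^2)$ when handling the derivatives), then pass to the limit by density. The first and third steps are essentially bookkeeping, the strict inequality $\delta>\alpha^{-1}$ doing exactly the work of swallowing the extra exponential-in-$n$ constants.
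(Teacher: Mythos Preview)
The paper does not prove this statement at all: it is quoted verbatim as Theorem~4 of \cite{T89} and used as a black box to reduce Theorem~\ref{thm:mainResult} to the resolvent estimate \eqref{gevbound}. There is therefore no ``paper's own proof'' against which to compare your attempt.

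That said, your sketch is essentially the classical argument one finds in \cite{T89} (and in Pazy for the differentiable case $\alpha=1$): extend the line bound to a parabolic region via the Neumann-series perturbation of the resolvent, push the inverse-Laplace contour onto the parabola $\gamma(\beta)=\mu-c|\beta|^{\alpha}+i\beta$, differentiate under the integral, and read off $(n!)^{\delta}$ growth from $\Gamma\bigl(\tfrac{n+1}{\alpha}-1\bigr)$ via Stirling. The three steps are correctly identified and the bookkeeping is right; in particular your treatment of the contour deformation in the second step---first on $D(\mathcal{A})$ (or $D(\mathcal{A}^2)$ for the derivatives) to get the extra $\lambda^{-1}$ decay that kills the horizontal segments, then extend by density---is exactly the point where a careless argument breaks, and you handle it. One small remark: rather than deforming back to the vertical line to identify $T_0(t)=S(t)$, it is slightly cleaner to verify directly that $t\mapsto T_0(t)$ satisfies the semigroup equation and agrees with $S(t)$ on a dense set as $t\downarrow 0$, but either route works.
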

\noindent For  $C_0-$semigroups of contractions, one has that $M=1$ and $\omega=0$. Hence, we will take $\mu=0$ (c.f. Theorem 1.1 of \cite{CT90}). Intuitively, one may think of Gevrey class as an intermediary between differentiability and analyticity.

The main result to be proved here is as follows:



\begin{theorem}\label{thm:mainResult}
    There exists a constant $C > 0$ such that
    \begin{equation}\label{gevbound}
    \|R(i\beta;\scrA)\|_{op} \leq \frac{C}{|\beta|^\alpha},
    \end{equation}
    for all $\beta \in \R$ which satisfies $|\beta| \geq \beta_0 > 0$ ($\beta_0$ fixed). Accordingly, the FSI semigroup $\{e^{\scrA t}\}_{t\geq 0}$ is of Gevrey class $\delta = 2+\rho$, for $t>0,$ where $\rho > 0$ is arbitrarily small.
\end{theorem}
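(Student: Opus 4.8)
\noindent\emph{Proof strategy.}
The plan is to prove the resolvent bound \eqref{gevbound} with exponent $\alpha=\tfrac{1}{2}$ and then invoke the restated Theorem 4 of \cite{T89} (applied with $M=1$, $\omega=0$, $\mu=0$, legitimate since $\{e^{\scrA t}\}_{t\ge0}$ is a contraction semigroup); this yields a semigroup of Gevrey class $\delta$ for every $\delta>\alpha^{-1}=2$, i.e. of class $\delta=2+\rho$ with $\rho>0$ arbitrarily small. Fix $\beta\in\R$ with $|\beta|\ge\beta_0$, take $\Phi^*=[w_1^*,w_2^*,u^*]\in\H$, and set $\Phi=[w_1,w_2,u]:=R(i\beta;\scrA)\Phi^*\in D(\scrA)$. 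By \eqref{GEN} and \eqref{pressure} the equation $(i\beta-\scrA)\Phi=\Phi^*$ reads, componentwise,
\[
i\beta w_1-w_2=w_1^*,\qquad i\beta w_2+\A w_1-p|_\Omega=w_2^*,\qquad i\beta u-\Lap u+\grad p=u^*,
\]
with $p=G_1(w_1)+G_2(u)$ solving \eqref{PBVP} and with $\div u=0$, $u|_S=0$, $u|_\Omega=[0,0,w_2]$. Writing $N:=\norm{\Phi^*}_\H$ and $P:=\norm{\Phi}_\H$ (so $P^2=\norm{\A^{1/2}w_1}_\Omega^2+\norm{w_2}_\Omega^2+\norm{u}_\calO^2$), the goal is $P\le C|\beta|^{-1/2}N$.

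The first input is the fluid dissipation. Pairing $(i\beta-\scrA)\Phi=\Phi^*$ with $\Phi$ in $\H$ and taking real parts, the dissipation identity for this system (see \cite{AC14}, \cite{AB14}), $\re\langle\scrA\Phi,\Phi\rangle_\H=-\norm{\grad u}_\calO^2$ --- the interface contributions dropping because $u|_S=0$ and because on the flat face $\Omega$ the conditions $\div u=0$, $u^{(1)}=u^{(2)}=0$ force $\partial_n u^{(3)}|_\Omega=0$ --- gives $\norm{\grad u}_\calO^2=\re\langle\Phi^*,\Phi\rangle_\H\le NP$. Since $u$ vanishes on $S\subset\partial\calO$, Poincar\'e's inequality gives $\norm{u}_\calO\lesssim\norm{\grad u}_\calO$, and the interpolation trace inequality gives
\[
\norm{w_2}_\Omega^2=\norm{u^{(3)}|_\Omega}_{L^2(\Omega)}^2\lesssim\norm{u}_\calO\,\norm{\grad u}_\calO\lesssim\norm{u}_\calO\,(NP)^{1/2}.
\]
The second input is an algebraic identity obtained from imaginary parts. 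Pairing the fluid equation with $u$ in $L^2(\calO)$ --- so that $(\grad p,u)_\calO=\langle p|_\Omega,w_2\rangle_\Omega$ while the Laplacian boundary term vanishes --- gives $i\beta\norm{u}_\calO^2+\norm{\grad u}_\calO^2+\langle p|_\Omega,w_2\rangle_\Omega=(u^*,u)_\calO$; pairing the plate equation with $w_2$ in $L^2(\Omega)$ and using $i\beta w_1=w_2+w_1^*$ to rewrite $\langle\A w_1,w_2\rangle_\Omega=\langle\A^{1/2}w_1,\A^{1/2}w_2\rangle_\Omega=-i\beta\norm{\A^{1/2}w_1}_\Omega^2-\langle\A^{1/2}w_1,\A^{1/2}w_1^*\rangle_\Omega$ gives a second expression for $\langle p|_\Omega,w_2\rangle_\Omega$; equating the imaginary parts of the two and cancelling $\langle p|_\Omega,w_2\rangle_\Omega$ produces
\[
\beta\bigl(\norm{u}_\calO^2+\norm{w_2}_\Omega^2-\norm{\A^{1/2}w_1}_\Omega^2\bigr)=\im(u^*,u)_\calO+\im\langle\A^{1/2}w_1,\A^{1/2}w_1^*\rangle_\Omega+\im\langle w_2^*,w_2\rangle_\Omega .
\]
The right-hand side is $\lesssim NP$, so $\bigl|\,\norm{u}_\calO^2+\norm{w_2}_\Omega^2-\norm{\A^{1/2}w_1}_\Omega^2\,\bigr|\lesssim|\beta|^{-1}NP$; in particular $\norm{\A^{1/2}w_1}_\Omega^2\le\norm{u}_\calO^2+\norm{w_2}_\Omega^2+C|\beta|^{-1}NP$, whence $P^2\le2\norm{w_2}_\Omega^2+2\norm{u}_\calO^2+C|\beta|^{-1}NP$.

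The heart of the argument --- and where I expect the main difficulty --- is to extract genuine $|\beta|$-decay for $\norm{u}_\calO$. The imaginary part of the fluid identity reads $\beta\norm{u}_\calO^2=\im(u^*,u)_\calO-\im(\grad p,u)_\calO$, so one must bound $(\grad p,u)_\calO$. This is delicate: the pressure is only implicitly determined, its boundary datum $\A w_1$ lies two derivatives below the energy norm of $w_1$, and the plate operator $\A-\beta^2$ appearing in the resolved plate equation is not uniformly invertible --- the plate dynamics being conservative, so its resonances can only be tamed through the coupling, not in isolation. The estimate should use that $p$ is harmonic in $\calO$ with the Robin/Neumann data \eqref{PBVP}, that the fluid equation furnishes $\Lap u\cdot n|_S=\partial_n p|_S$ and $(\Lap u)^{(3)}|_\Omega=i\beta w_2+\partial_n p|_\Omega-w_2^*$, and the $H^2$-elliptic regularity that geometric Assumption 1 supplies (via \cite{CR13}, \cite{D89}, \cite{BR80}), so as to control $(\grad p,u)_\calO$ by quantities already in hand --- the fluid dissipation absorbing the resonant part of the plate response --- and thereby conclude, after Young's inequality, that $\norm{u}_\calO^2\lesssim|\beta|^{-1}NP$.

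The pieces then combine. Inserting $\norm{u}_\calO^2\lesssim|\beta|^{-1}NP$ into the trace bound gives $\norm{w_2}_\Omega^2\lesssim(|\beta|^{-1}NP)^{1/2}(NP)^{1/2}=|\beta|^{-1/2}NP$, and then the algebraic identity gives $\norm{\A^{1/2}w_1}_\Omega^2\lesssim|\beta|^{-1/2}NP$ too; summing, $P^2=\norm{\Phi}_\H^2\lesssim|\beta|^{-1/2}NP+|\beta|^{-1}N^2$. Since $|\beta|^{-1/2}NP\le\tfrac{1}{2}P^2+\tfrac{1}{2}|\beta|^{-1}N^2$, absorbing gives $P^2\lesssim|\beta|^{-1}N^2$, that is $\norm{R(i\beta;\scrA)}_{op}=P/N\le C|\beta|^{-1/2}$ for all $|\beta|\ge\beta_0$. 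This is \eqref{gevbound} with $\alpha=\tfrac{1}{2}$, and Theorem 4 of \cite{T89} then gives that $\{e^{\scrA t}\}_{t\ge0}$ is of Gevrey class $\delta$ for every $\delta>2$, i.e. $\delta=2+\rho$ with $\rho>0$ arbitrarily small.
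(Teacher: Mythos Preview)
Your proposal contains a genuine gap at precisely the place you flag as ``the heart of the argument'': the estimate $\norm{u}_\calO^2\lesssim|\beta|^{-1}NP$ is never established, and the route you sketch for it is circular. Indeed, $(\grad p,u)_\calO=\langle p|_\Omega,w_2\rangle_\Omega$, and from the plate equation $p|_\Omega=i\beta w_2+\A w_1-w_2^*$ together with $w_2=i\beta w_1-w_1^*$ one computes
\[
\im\langle p|_\Omega,w_2\rangle_\Omega=\beta\bigl(\norm{w_2}_\Omega^2-\norm{\A^{1/2}w_1}_\Omega^2\bigr)+O(NP).
\]
Thus bounding $\im\langle p|_\Omega,w_2\rangle_\Omega$ by $NP$ is equivalent to controlling $|\beta|\bigl|\norm{w_2}_\Omega^2-\norm{\A^{1/2}w_1}_\Omega^2\bigr|$, which your algebraic identity only gives modulo an extra $|\beta|\norm{u}_\calO^2$ --- exactly the quantity you are trying to bound. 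The appeal to the pressure BVP and elliptic regularity does not break this loop: the Robin datum $\A w_1$ for $p$ sits two full derivatives above the energy level of $w_1$, so any direct estimate of $p|_\Omega$ will reintroduce $\A w_1$ and hence the same cross term.

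The paper takes a different and more direct path that avoids the pressure entirely. From the imaginary part it obtains $\norm{\Phi}_\H^2\le|\beta|^{-1}NP+2|\beta|^{-1}|\langle\A w_1,w_2\rangle_\Omega|$, and then estimates the cross term by shifting fractional powers: $\langle\A w_1,w_2\rangle_\Omega=\langle\A^{7/8+\vep}w_1,\A^{1/8-\vep}w_2\rangle_\Omega$. The point is that $D(\A^{1/8-\vep})=H^{1/2-4\vep}(\Omega)$, so the matching condition $w_2=u^{(3)}|_\Omega$ and the trace theorem give $\norm{\A^{1/8-\vep}w_2}_\Omega\lesssim\norm{\grad u}_\calO\le(NP)^{1/2}$ --- the fluid dissipation supplies a half-derivative gain on $w_2$. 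Interpolating $\norm{\A^{7/8+\vep}w_1}_\Omega\le\norm{\A w_1}_\Omega^{3/4+2\vep}\norm{\A^{1/2}w_1}_\Omega^{1/4-2\vep}$ and using $\norm{\A w_1}_\Omega\le\norm{\scrA\Phi}_\H=\norm{i\beta\Phi-\Phi^*}_\H$ then closes the estimate with exponent $\alpha=\tfrac{1}{2}-4\vep$. Your attempt to reach the clean endpoint $\alpha=\tfrac{1}{2}$ is what forces the unproven fluid decay; the paper accepts the $\vep$-loss (which still yields Gevrey class $2+\rho$) and thereby sidesteps any pressure analysis.
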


\section{Gevrey Class Estimates of the Generator}

\begin{proof}[Proof of Theorem \ref{thm:mainResult}]
Given $\scrA: D(\scrA)\subset \H \to \H$ of \eqref{GEN} - \eqref{domain}, we consider the resolvent equation
\begin{align}
    (i\beta I-\scrA)\Phi=\Phi^*.\label{reseqn}
\end{align}
where dependent variable $\Phi = [w_1,w_2,u]^T \in D(\scrA)$ and data $\Phi^* = [w_1^*, w_2^*, u^*]^T \in \H.$ Also, fixed (``frequency domain") parameter $\beta$ satisfies $|\beta| > \beta_0,$ where $\beta_0 \geq 1.$

In view of \eqref{gevbound}, we must obtain a suitable estimate for the solution variable $\Phi.$ Taking the inner product of both sides of \eqref{reseqn} with respect to $\Phi,$ we have
\begin{equation}\label{resinnerproduct}
    ((i\beta I - \scrA)\Phi, \Phi)_\H = (\Phi^*,\Phi)_\H.
\end{equation}
Now, using the definition of $\scrA$ in \eqref{GEN}, we have
\begin{align*}
    (-\scrA\Phi,\Phi)_\H = & -\langle \A^{\frac{1}{2}}w_2, \A^{\frac{1}{2}}w_1\rangle_\Omega + \langle \A w_1,w_2\rangle_\Omega - \langle p|_\Omega, w_2\rangle_\Omega- (\Delta u,u)_\calO + (\grad p,u)_\calO.
\end{align*}
Here, the pressure variable $p$ is given viz.
$$p = G_1(w_1) + G_2(u),$$ as in \eqref{pressure} (and \eqref{g1def}-\eqref{g2def}).

Integrating by parts and using the matching velocity boundary condition $u|_\Omega = [0,0,w_2]$ (as well as $u|_{\partial\calO\setminus\Omega} = 0),$ we have
\begin{align*}
    -(\scrA\Phi,\Phi)_\H = 2i\im\left\langle \A^{\frac{1}{2}}w_1, \A^{\frac{1}{2}}w_2\right\rangle_\Omega - \|\grad u\|_\calO^2.
\end{align*}
Applying this relation to the left hand side of \eqref{resinnerproduct}, we have now
\begin{align}
    i\beta\norm{\Phi}^2_\H+2i\im\langle\A w_1,w_2\rangle_\Omega+\norm{\grad u}_\calO^2=\langle \Phi^*,\Phi\rangle_\H.\label{maineqn}
\end{align}
Taking real and imaginary parts of \eqref{maineqn} and applying the triangle inequality gives the following inequalities:
\begin{align}
    \norm{\Phi}_\H^2&\leq\frac{1}{|\beta|}|\langle \Phi^*,\Phi\rangle_\H|+\frac{2}{|\beta|}|\langle\A w_1,w_2\rangle_\Omega|,\label{mainineq}\\
    \norm{\grad u}_\calO^2&\leq|\langle \Phi^*,\Phi\rangle_\H|.\label{fluidineq}
\end{align}

The main task is to bound the second term on the right hand side of \eqref{mainineq}. At this point, we recall the characterization of the domains of fractional powers of $\A:D(\A)\subset L^2(\Omega) \to L^2(\Omega)$ in \cite{G67}. Namely,
\begin{equation}\label{fractionalDomain}
    D(\A^\alpha) = \begin{cases} 
    H^{4\alpha}(\Omega), & \text{ for } 0 \leq \alpha < \frac{1}{8} \\
    H_0^{4\alpha}(\Omega), & \text{ for } \alpha \in \left(\frac{1}{8},\frac{3}{8}\right) \cup \left(\frac{3}{8},\frac{1}{2}\right] \\
    H^{4\alpha}(\Omega)\cap H_0^2(\Omega), & \text{ for } \frac{1}{2} \leq \alpha \leq 1.
    \end{cases}
\end{equation}
Recalling the definition of the FSI generator domain $D(\scrA)$ from \eqref{domain}, we then transfer $\A^{\frac{1}{8}-\vep}$ from $w_1$ onto $w_2$, where $\vep>0$ is small, and interpolate.
\begin{align}
    \frac{2}{|\beta|}|\langle\A w_1,w_2\rangle_\Omega|&=\frac{2}{|\beta|}|\langle\A^{\frac{7}{8}+\vep}w_1, \A^{\frac{1}{8}-\vep}w_2\rangle_\Omega| \\
    &\leq\frac{2}{|\beta|}\norm{\A^{\frac{7}{8}+\vep}w_1}_\Omega\norm{\A^{\frac{1}{8}-\vep}w_2}_\Omega \\
    &\leq \frac{2}{|\beta|}\norm{\A w_1}_\Omega^{\frac{3}{4}+2\vep}\norm{\A^{\frac{1}{2}}w_1}_\Omega^{\frac{1}{4}-2\vep}\norm{\A^{\frac{1}{8}-\vep}w_2}_\Omega. \label{interpIneq}
\end{align}
Next, we note that $D(\A^{\frac{1}{8}-\vep})=H^{\frac{1}{2}-4\vep}(\Omega)$ from \eqref{fractionalDomain}. Applying this characterization to the last term of inequality \eqref{interpIneq} and invoking the Sobolev Trace Theorem gives
\begin{align*}
     \frac{2}{|\beta|}\norm{\A w_1}_\Omega^{\frac{3}{4}+2\vep}\norm{\A^{\frac{1}{2}}w_1}_\Omega^{\frac{1}{4}-2\vep}\norm{\A^{\frac{1}{8}-\vep}w_2}_\Omega&\leq \frac{C}{|\beta|}\norm{\A w_1}_\Omega^{\frac{3}{4}+2\vep}\norm{\A^{\frac{1}{2}}w_1}_\Omega^{\frac{1}{4}-2\vep}\norm{w_2}_{H^{\frac{1}{2}-4\vep}(\Omega)}\\
     &\leq \frac{C}{|\beta|}\norm{\A w_1}_\Omega^{\frac{3}{4}+2\vep}\norm{\A^{\frac{1}{2}}w_1}_\Omega^{\frac{1}{4}-2\vep}\norm{w_2}_{H^{\frac{1}{2}}(\Omega)}\\
     &\leq \frac{C}{|\beta|}\norm{\A w_1}_\Omega^{\frac{3}{4}+2\vep}\norm{\A^{\frac{1}{2}}w_1}_\Omega^{\frac{1}{4}-2\vep}\norm{u}_{H^1(\calO)}\\
     &\leq \frac{C}{|\beta|}\norm{\A w_1}_\Omega^{\frac{3}{4}+2\vep}\norm{\A^{\frac{1}{2}}w_1}_\Omega^{\frac{1}{4}-2\vep}\norm{\grad u}_{\calO}.
\end{align*}
In obtaining this estimate, we have also invoked the matching of velocities on $\Omega$ and Poincare's inequality. We next turn our attention to the two terms involving $w_1$. Majorizing by the measurements $\norm{\scrA\Phi}_\H$ and $\norm{\Phi}_\H$, then applying \eqref{reseqn} and \eqref{fluidineq} gives
\begin{align*}
    \frac{C}{|\beta|}\norm{\A w_1}_\Omega^{\frac{3}{4}+2\vep}\norm{\A^{\frac{1}{2}}w_1}_\Omega^{\frac{1}{4}-2\vep}\norm{\grad u}_{\calO}&\leq \frac{C}{|\beta|}\norm{\scrA \Phi}_\H^{\frac{3}{4}+2\vep}\norm{\Phi}_\H^{\frac{1}{4}-2\vep}\norm{\grad u}_{\calO}\\
    &\leq \frac{C}{|\beta|}\norm{i\beta \Phi-\Phi^*}_\H^{\frac{3}{4}+2\vep}\norm{\Phi}_\H^{\frac{1}{4}-2\vep}\norm{\Phi}_\H^{\frac{1}{2}}\norm{\Phi^*}_\H^{\frac{1}{2}}\\
    &\leq \frac{C}{|\beta|}\norm{\Phi}_\H^{\frac{1}{4}-2\vep}\norm{\Phi}_\H^{\frac{1}{2}}\norm{\Phi^*}_\H^{\frac{1}{2}}(|\beta|^{\frac{3}{4}+2\vep}\norm{ \Phi}_\H^{\frac{3}{4}+2\vep}+\norm{\Phi^*}_\H^{\frac{3}{4}+2\vep})\\
    &= \frac{C}{|\beta|}(|\beta|^{\frac{3}{4}+2\vep}\norm{\Phi}_\H^{\frac{3}{2}}\norm{\Phi^*}_\H^{\frac{1}{2}}+\norm{\Phi}_\H^{\frac{3}{4}-2\vep}\norm{\Phi^*}_\H^{\frac{5}{4}+2\vep}).
\end{align*}
Finally, take $\vep_1>0$. We apply Young's inequality with $\vep_1$ (Appendix B.2.c of \cite{E98}) to the two terms on the right hand side to get
\begin{align}
    \frac{C}{|\beta|}|\beta|^{\frac{3}{4}+2\vep}\norm{\Phi}_\H^{\frac{3}{2}}\norm{\Phi^*}_\H^{\frac{1}{2}}&=\frac{C}{|\beta|^{\frac{1}{4}-2\vep}}\norm{\Phi}_\H^{\frac{3}{2}}\norm{\Phi^*}_\H^{\frac{1}{2}}\leq C\vep_1\norm{\Phi}_\H^2+\frac{C(\vep_1)}{|\beta|^{1-8\vep}}\norm{\Phi^*}^2_\H,\label{ineq1}\\
    \frac{C}{|\beta|}\norm{\Phi}_\H^{\frac{3}{4}-2\vep}\norm{\Phi^*}_\H^{\frac{5}{4}+2\vep}&\leq C\vep_1\norm{\Phi}_\H^2+\frac{C(\vep_1)}{|\beta|^{\frac{8}{5+8\vep}}}\norm{\Phi^*}_\H^2. \label{ineq2}
\end{align}
Next, note that $1-8\vep<\frac{2}{\frac{5}{4}+2\vep}$, so $|\beta|^{1-8\vep}<|\beta|^\frac{2}{\frac{5}{4}+2\vep}$. Applying this to \eqref{ineq2} combining with \eqref{ineq1}, and rescaling $\vep > 0$ gives the final estimate
\begin{align}
    \frac{2}{|\beta|}|\langle\A w_1, w_2\rangle_\Omega|\leq C\vep_1\norm{\Phi}_\H^2+\frac{C(\vep_1)}{|\beta|^{1-8\vep}}\norm{\Phi^*}^2.\label{pltineq}
\end{align}
Returning to \eqref{mainineq}, we apply Young's inequality with $\vep_1$ to the first term of the right hand side, and \eqref{pltineq} to the second term of the right hand side to obtain
\begin{align}
    \norm{\Phi}_\H^2&\leq \vep_1\norm{\Phi}^2_\H+\frac{C(\vep_1)}{|\beta|^2}\norm{\Phi^*}^2_\H+C\vep_1\norm{\Phi}_\H^2+\frac{C(\vep_1)}{|\beta|^{1-8\vep}}\norm{\Phi^*}_\H^2\\
    &\leq C\vep_1\norm{\Phi}_\H^2+\frac{C(\vep_1)}{|\beta|^{1-8\vep}}\norm{\Phi^*}^2_\H.
\end{align}
Take $\vep_1$ sufficiently small so that $1-C\vep_1\in(0,1)$. Rearranging and taking the square root yields
\begin{align}
    \norm{\Phi}_\H\leq\frac{C(\vep_1)}{(1-C\vep_1)|\beta|^{\frac{1}{2}-4\vep}}\norm{\Phi^*}_\H.
\end{align}
That is, we infer the resolvent estimate
\begin{align}
    \norm{R(i\beta;\scrA)}_{op}\leq \frac{C(\vep_1)}{(1-C\vep_1)|\beta|^{\frac{1}{2}-4\vep}}.
\end{align}
To conclude: The FSI semigroup S(t) = exp(tA) is of Gevrey class $\delta = \frac{2}{1-8\vep},$ or with $\rho = \frac{16\vep}{1-8\vep}$ where $\vep$ is arbitrarily small, $\delta = 2+\rho,$ and $t \geq 0.$ \qed
\end{proof}

One should note that this result arises in absence of any damping on the plate. For $\eta\in(0,1]$ one may ask how the presence of a damping term on the plate affects the overall regularity of the system. This is encapsulated by the following result.
\begin{corollary}
In the fluid structure system \eqref{FSI}, replace the undamped plate equation with the damped plate equation
\begin{align*}
    w_{tt}+\A w+\A^{\frac{\eta}{2}}w_t=p|_\Omega.
\end{align*}
The generator of this $C_0$-semigroup of contractions, $\scrA_\eta$, is such that $D(\scrA_\eta)=D(\scrA)$ for all $\eta\in(0,1]$, and in addition, the semigroup is Gevrey class $\delta\geq\frac{1}{\eta}$ for $t\geq0$. In particular, for $\eta=1$, the semigroup is analytic.
\end{corollary}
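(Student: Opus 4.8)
The plan is to adapt, essentially verbatim, the resolvent argument of Theorem \ref{thm:mainResult} to the damped generator $\scrA_\eta$, and then to track how the extra dissipation $-\A^{\eta/2}w_t$ strengthens the a priori estimates. First I would write down the resolvent equation $(i\beta I - \scrA_\eta)\Phi = \Phi^*$ for $\Phi = [w_1,w_2,u]^T \in D(\scrA_\eta)$, take the $\H$-inner product with $\Phi$, and integrate by parts exactly as before. The only new term is $-\langle \A^{\eta/2}w_2, w_2\rangle_\Omega = -\|\A^{\eta/4}w_2\|_\Omega^2$, which is dissipative; so the analogue of \eqref{maineqn} becomes
\begin{align*}
    i\beta\norm{\Phi}^2_\H + 2i\,\im\langle\A w_1,w_2\rangle_\Omega + \norm{\grad u}_\calO^2 + \norm{\A^{\eta/4}w_2}_\Omega^2 = \langle \Phi^*,\Phi\rangle_\H.
\end{align*}
Taking real and imaginary parts, in addition to \eqref{mainineq} I now also get the sharper dissipation bound $\norm{\A^{\eta/4}w_2}_\Omega^2 \leq |\langle \Phi^*,\Phi\rangle_\H| \leq \norm{\Phi^*}_\H\norm{\Phi}_\H$. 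Before doing any of this I would verify the claim $D(\scrA_\eta) = D(\scrA)$: since $0 < \eta \leq 1$, the perturbation $w_2 \mapsto \A^{\eta/2}w_2$ maps $H_0^2(\Omega) = D(\A^{1/2})$ into $D(\A^{(\eta-1)/2}) \supseteq L^2(\Omega)$ (in fact continuously into a negative-order space that is dominated by the plate equation's output), and the standard bounded-perturbation / closed-graph argument of \cite{AC14} shows the domain and the contraction-semigroup property are unchanged.

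Next I would redo the key estimate on $\frac{2}{|\beta|}|\langle\A w_1,w_2\rangle_\Omega|$, this time transferring the power of $\A$ so as to exploit the new bound on $\norm{\A^{\eta/4}w_2}_\Omega$. Concretely I would write $\langle \A w_1, w_2\rangle_\Omega = \langle \A^{1-\eta/4}w_1, \A^{\eta/4}w_2\rangle_\Omega$, bound by $\norm{\A^{1-\eta/4}w_1}_\Omega\norm{\A^{\eta/4}w_2}_\Omega$, interpolate $\norm{\A^{1-\eta/4}w_1}_\Omega \leq \norm{\A w_1}_\Omega^{1-\eta/4}\norm{\A^{1/2}w_1}_\Omega^{\eta/4}$ (valid since $1-\eta/4 \in [3/4,1]$), and then replace $\norm{\A w_1}_\Omega$ by $\norm{\scrA_\eta\Phi}_\H \leq |\beta|\norm{\Phi}_\H + \norm{\Phi^*}_\H$ via the resolvent equation, $\norm{\A^{1/2}w_1}_\Omega$ by $\norm{\Phi}_\H$, and $\norm{\A^{\eta/4}w_2}_\Omega$ by $(\norm{\Phi^*}_\H\norm{\Phi}_\H)^{1/2}$ from the dissipation inequality. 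Expanding the $(|\beta|\norm{\Phi}_\H + \norm{\Phi^*}_\H)^{1-\eta/4}$ factor and dividing by $|\beta|$, the dominant term is of order $|\beta|^{-\eta/4}\norm{\Phi}_\H^{2-\eta/4+1/2}\norm{\Phi^*}_\H^{\eta/4+1/2}$; a lower-order term in $\norm{\Phi^*}_\H$ alone is harmless for large $|\beta|$. Applying Young's inequality with a small parameter $\vep_1$ to split off $C\vep_1\norm{\Phi}_\H^2$ and absorb it on the left of the analogue of \eqref{mainineq}, I expect to land on $\norm{\Phi}_\H \leq C|\beta|^{-\eta/2}\norm{\Phi^*}_\H$, i.e. $\norm{R(i\beta;\scrA_\eta)}_{op} \leq C/|\beta|^{\alpha}$ with $\alpha = \eta/2$. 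Invoking Theorem \ref{thm:mainResult}'s citation (Theorem 4 of \cite{T89}) then gives Gevrey class $\delta > \alpha^{-1} = 2/\eta$ — hmm, this only recovers $\delta \geq 2/\eta$, not $\delta \geq 1/\eta$, so I need to be more careful with the bookkeeping of exponents.

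The main obstacle is precisely this factor-of-two in the exponent, and resolving it is where the real work lies. To get $\alpha = \eta$ (hence $\delta \geq 1/\eta$, and analyticity at $\eta = 1$), I believe one must not crudely bound $\norm{\scrA_\eta\Phi}_\H$ but instead extract an additional power of $|\beta|^{-\eta/2}$ from the structure. The cleanest route is probably to \emph{not} pass through $\norm{\A w_1}_\Omega$ at all for part of the term: use the second component of the resolvent equation, $i\beta w_2 - (-\A w_1 + G_1(w_1) + G_2(u) + \ldots) = w_2^*$ on $\Omega$ — more precisely $i\beta w_2 + \A^{\eta/2}w_2 = -\A w_1 + p|_\Omega + w_2^*$ — to write $\A w_1 = -i\beta w_2 - \A^{\eta/2}w_2 + p|_\Omega + w_2^*$, so that $\langle \A w_1, w_2\rangle_\Omega = -i\beta\norm{w_2}_\Omega^2 - \norm{\A^{\eta/4}w_2}_\Omega^2 + \langle p|_\Omega, w_2\rangle_\Omega + \langle w_2^*, w_2\rangle_\Omega$. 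Taking the imaginary part kills the first two terms, leaving $\im\langle \A w_1, w_2\rangle_\Omega = \im\langle p|_\Omega + w_2^*, w_2\rangle_\Omega$; here $p|_\Omega$ is controlled in $H^{-1/2}(\Omega)$ (or better) by $\norm{\A w_1}_{(H_0^2)'} + \norm{\grad u}_\calO$ through the Robin-map definitions of $G_1,G_2$, and $\norm{w_2}_\Omega$ is controlled by $\norm{\A^{\eta/4}w_2}_\Omega^{?}\norm{\Phi}_\H^{?}$ via interpolation downward from $D(\A^{\eta/4})$, i.e. by $(\norm{\Phi^*}_\H\norm{\Phi}_\H)^{1/2}$ times a possibly $|\beta|$-free constant. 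Carefully combining these — the $\grad u$ piece is already $O(\norm{\Phi^*}_\H^{1/2}\norm{\Phi}_\H^{1/2})$ by \eqref{fluidineq}, and the $\A w_1$ piece can be treated by the interpolation above but now only needs one power of $\norm{\A^{\eta/4}w_2}_\Omega$ — should yield $\frac{2}{|\beta|}|\im\langle\A w_1,w_2\rangle_\Omega| \leq C\vep_1\norm{\Phi}_\H^2 + C(\vep_1)|\beta|^{-\eta}\norm{\Phi^*}_\H^2$ for $|\beta|$ large (again using $|\beta| \geq \beta_0 \geq 1$ to discard lower-order powers), and hence the desired $\norm{R(i\beta;\scrA_\eta)}_{op} \leq C|\beta|^{-\eta}$. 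Feeding $\alpha = \eta$ into Theorem 4 of \cite{T89} gives Gevrey class $\delta$ for every $\delta > 1/\eta$, i.e. $\delta \geq 1/\eta$ in the limiting sense stated, and at $\eta = 1$ the resolvent is uniformly bounded on the imaginary axis, which together with the contraction property gives analyticity by the Liouville-type / Pazy characterization. The delicate points to get right are the exact Sobolev indices in the downward interpolation of $w_2$ and the exact space in which $p|_\Omega$ is estimated; these are routine but must be done with care to avoid losing powers of $|\beta|$.
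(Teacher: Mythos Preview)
The paper states this Corollary without proof, so there is no argument to compare against; I can only assess your proposal on its own merits.

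Your first approach is the right one and, carried out correctly, already delivers $\alpha=\eta$. The slip is in the interpolation step: the moment inequality between $\A$ and $\A^{1/2}$ reads
\[
\norm{\A^{1-\eta/4}w_1}_\Omega \leq \norm{\A w_1}_\Omega^{\,1-\eta/2}\,\norm{\A^{1/2}w_1}_\Omega^{\,\eta/2},
\]
since $1-\eta/4 = (1-\eta/2)\cdot 1 + (\eta/2)\cdot\tfrac12$; your exponents $1-\eta/4$ and $\eta/4$ are wrong. With the correct exponents, bounding $\norm{\A w_1}_\Omega$ by $\norm{\scrA_\eta\Phi}_\H\le |\beta|\norm{\Phi}_\H+\norm{\Phi^*}_\H$ (valid because $D(\scrA_\eta)=D(\scrA)$ carries the same graph-norm equivalence, the perturbation $\A^{\eta/2}w_2$ being $\scrA$-bounded), $\norm{\A^{1/2}w_1}_\Omega$ by $\norm{\Phi}_\H$, and $\norm{\A^{\eta/4}w_2}_\Omega$ by $(\norm{\Phi^*}_\H\norm{\Phi}_\H)^{1/2}$, the dominant contribution is
\[
\frac{C}{|\beta|}\,|\beta|^{1-\eta/2}\norm{\Phi}_\H^{1-\eta/2}\,\norm{\Phi}_\H^{\eta/2}\,\norm{\Phi}_\H^{1/2}\norm{\Phi^*}_\H^{1/2}
= C|\beta|^{-\eta/2}\norm{\Phi}_\H^{3/2}\norm{\Phi^*}_\H^{1/2}.
\]
Young's inequality with conjugate pair $(4/3,4)$ then gives $\vep_1\norm{\Phi}_\H^2 + C(\vep_1)|\beta|^{-2\eta}\norm{\Phi^*}_\H^2$, hence $\norm{R(i\beta;\scrA_\eta)}_{op}\le C|\beta|^{-\eta}$, which is exactly $\alpha=\eta$ and Gevrey class $\delta>1/\eta$ (analyticity at $\eta=1$). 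The entire second half of your proposal is unnecessary.

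That second route also contains an error: taking the imaginary part of $-i\beta\norm{w_2}_\Omega^2$ produces $-\beta\norm{w_2}_\Omega^2$, not zero, so this term is \emph{not} killed. After dividing by $|\beta|$ it leaves a contribution $2\norm{w_2}_\Omega^2$ with no $|\beta|$-decay; one can absorb it via $\norm{w_2}_\Omega^2\le C\norm{\A^{\eta/4}w_2}_\Omega^2\le C\norm{\Phi^*}_\H\norm{\Phi}_\H$, but then you are back to needing the pressure estimate to carry all the decay, which is considerably more delicate than the clean interpolation argument above.
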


\section{Numerical Results}
\noindent With Theorem \ref{thm:mainResult} in hand, one would expect to see empirical evidence of ``infinite speed of propagation'' in given numerical simulations. That is, rough appropriate initial data is ``smoothed out" as time evolves. Below, we supply a numerical scheme and example which reinforces this assertion. The issue of convergence of the finite dimensional system to the fully infinite dimensional system \eqref{FSI} will be addressed in a future work. Numerical results which suggest this convergence are supplied in \cite{M24}.

We take $\calO = (0,1)^2\subset \R^2$ with $\Omega = \{(x,1):x\in(0,1)\}$ and $S=\partial\calO\setminus\Omega$. Note that this geometry satisfies geometric assumption (G.2). With respect to this geometry $\{\calO,\Omega\},$ the FSI system becomes
\begin{equation} \label{eqn:2dFSI}
    \begin{cases} w_{tt} + \frac{d^4 w}{dx^4} = p|_\Omega \text{ in } \Omega \times (0,\infty) \\
    w = w_x = 0 \text{ on } x = 0 \text{ and } x = 1, t > 0 \\
    u_t - \Delta u + \grad p = 0 \text{ in } \calO\times (0,\infty) \\
    \text{div}(u) = 0 \text{ in } \calO \times (0,\infty) \\
    u = 0 \text{ on } S, u = [0,0,w_t] \text{ on } \Omega \\
    [w(0),w_t(0),u(0)] = [w_1^0, w_2^0, u^0] \in \H.
    \end{cases}
\end{equation}

To deal with geometries satisfying geometric assumption (G.1) via the finite element method (FEM), one could in principle appeal to isoparametric elements (see \cite{AB84}). Here, take $\T$ to be a triangulation of the geometry $\calO$, which consists of $L_f$ triangular elements with $M$ nodes. Relative to the triangulation $\T$, we invoke $\P_2/\P_1$ Taylor-Hood conforming bases, with respect to the fluid and pressure solution components of the FSI (see p. 195 of \cite{AB84}). We thusly define $\{\phi_1,\phi_2,...,\phi_{M}\}$ and $\{\psi_1,\psi_2,...,\psi_{M_p}\}$ to be the fluid and pressure bases, respectively. This basis is appropriately conforming.

As the space of finite energy for the plate component is $H_0^2(\Omega)$, we impose an appropriate $H^2$-conforming basis. In one spatial dimension, one can appeal to standard Hermite elements. In particular we employ the following quintic Hermite element as our standard element: 

\begin{figure}[htbp!]
\begin{center}
\hspace{-0.5in}
\begin{tikzpicture}
\draw[black, thick] (0,0) -- (8,0);

\filldraw[black] (0,0) circle (2pt) ;
\draw[black, thick] (0,0) circle (5pt);
\draw  node[below] at (0,-0.15) {$x_{i_1^\ell}$};
\draw node[above] at (0,0.15) {$\theta_5$};
\draw node[above] at (0,0.65) {$\theta_1$};

\filldraw[black] (8,0) circle (2pt) ;
\draw[black, thick] (8,0) circle (5pt);
\draw  node[below] at (8,-0.15) {$x_{i_2^\ell}$};
\draw node[above] at (8,0.15) {$\theta_6$};
\draw node[above] at (8,0.65) {$\theta_2$};

\filldraw[black] (8/3,0) circle (2pt) ;
\draw  node[below] at (8/3,-0.15) {$x_{i_3^\ell}$};
\draw node[above] at (8/3,0.65) {$\theta_3$};

\filldraw[black] (16/3,0) circle (2pt) ;
\draw  node[below] at (16/3,-0.15) {$x_{i_4^\ell}$};
\draw node[above] at (16/3,0.65) {$\theta_4$};

\draw node[above] at (-1,0.2) {$\partial_x w:$};
\draw node[above] at (-1,0.7) {$w:$};
\end{tikzpicture}
\end{center}
\end{figure}
The above equidistant standard element is appropriate for the computations at hand, yielding mass and plate stiffness matrices which are sufficiently conditioned \cite{S06}. We thus define the Hermite basis $\{\theta_1,\theta_2,...,\theta_{DOF_s}\}$. Denoting by $\tilde{M}$ and $L_s$ the number of plate nodes and elements respectively, one finds by counting that $DOF_s = \tilde{M} + \frac{\tilde{M}+2}{3}$. In our plate basis, the global node indices $\{1,...,\tilde{M}\}$ correspond to the Lagrange interpolation points, and the global node indices $\{\tilde{M}+1,...,DOF_s\}$ correspond to the points where we additionally enforce Hermite interpolation. In extending these dynamics to a 3d-2d coupled system, one would need to employ Argyris elements for the plate, as the usual Hermite elements are not $H^2$ conforming in dimensions higher than 1 (see p. 255 of \cite{S06}).

We these bases in hand, we define the following matrices:
\begin{align}
    \text{(Fluid Mass Matrix)}~M_f&:=\begin{bmatrix}(\phi_1,\phi_1)_\calO&\hdots&(\phi_1,\phi_{M})_\calO\\
    \vdots&\ddots&\vdots\\
    (\phi_{M},\phi_1)_\calO&\hdots&(\phi_{M},\phi_{M})_\calO
    \end{bmatrix}\label{matrix1}\\ 
    \text{(Fluid Stiffness Matrix)}~K_f&:=\begin{bmatrix}(\grad\phi_1,\grad\phi_1)_\calO&\hdots&(\grad\phi_1,\grad\phi_{M})_\calO\\
    \vdots&\ddots&\vdots\\
    (\grad\phi_{M},\grad\phi_1)_\calO&\hdots&(\grad\phi_{M},\grad\phi_{M})_\calO
    \end{bmatrix}\\
    \text{(Divergence Component 1)}~B_x&:=\begin{bmatrix}(\partial_x\phi_1,\psi_1)_\calO&\hdots&(\partial_x\phi_1,\psi_{M_p})_\calO\\
    \vdots&\ddots&\vdots\\
    (\partial_x\phi_{M},\psi_1)_\calO&\hdots&(\partial_x\phi_{M},\psi_{M_p})_\calO
    \end{bmatrix}\\
    \text{(Divergence Component 2)}~B_y&:=\begin{bmatrix}(\partial_y\phi_1,\psi_1)_\calO&\hdots&(\partial_y\phi_1,\psi_{M_p})_\calO\\
    \vdots&\ddots&\vdots\\
    (\partial_y\phi_{M},\psi_1)_\calO&\hdots&(\partial_y\phi_{M},\psi_{M_p})_\calO
    \end{bmatrix}\\
    \text{(Plate Mass Matrix)}~M_s&:=\begin{bmatrix}\langle\theta_1,\theta_1\rangle_\Omega&\hdots&\langle\theta_1,\theta_{\tilde{M}}\rangle_\Omega\\
    \vdots&\ddots&\vdots\\
    \langle\theta_{\tilde{M}},\theta_1\rangle_\Omega&\hdots&\langle\theta_{\tilde{M}},\theta_{\tilde{M}}\rangle_\Omega
    \end{bmatrix}\\
    \text{(Plate ``S'' Matrix)}~S&:=\begin{bmatrix}\langle\theta_1'',\theta_1''\rangle_\Omega&\hdots&\langle\theta_1'',\theta_{\tilde{M}}''\rangle_\Omega\\
    \vdots&\ddots&\vdots\\
    \langle\theta_{\tilde{M}}'',\theta_1''\rangle_\Omega&\hdots&\langle\theta_{\tilde{M}}'',\theta_{\tilde{M}}''\rangle_\Omega
    \end{bmatrix}\label{matrix6}
\end{align}
From \cite{AC14}, recall that the pressure $p$ has two distinct components, a mean free part, and a constant part. We denote this decomposition as
\begin{align}
    p(t) = q(t) + \tilde{c},\label{pres_decomp}
\end{align}
where $q\in \hat{L}^2(\calO)$ and $\tilde{c}\in \R$. We now denote $\{[u_1,u_2,w_1,w_2],p\}$ to be the (finite dimensional) approximation to (infinite dimensional) FSI solution $\{[\vec{u},w,w_t],p\}$ of \eqref{FSI}, which solves the approximating semi-discretized system of ordinary differential equations:
    \begin{align}
    \begin{cases}
        M_f u_1' + K_f u_1 + B_x q = F_{1f},\\
        M_f u_2' + K_f u_2 + B_y q = F_{2f},\\
        B_x^T u_1 + B_y^T u_2 = 0,\\
        M_s w_2' + S w_1 - \tilde{C}E_s^T = F_s,\\
        M_s w_1' = M_s w_2,\\
        E_s w_2 = 0.
    \end{cases}\label{discFSI}
    \end{align}
That is, in the above system we take $u_1$ to mean the solution (c.f. FSI \eqref{eqn:2dFSI}) $u_{1I}\in V_M=\spanset\{\phi_1,\phi_2,...,\phi_M\}\subset H_0^1(\calO)$ and likewise for the other solution variables. The quantity $\tilde{C}=[\langle \tilde{c},\theta_1\rangle_\Omega,...,\langle \tilde{c},\theta_{DOF_s}\rangle_\Omega]^T$ in \ref{discFSI} is the Babuska-Brezzi pressure which arises from the constraint that $w_2$ is mean free as 
\begin{align*}
\int_{\Omega}w_2~d\Omega=\int_\calO\div([u_1,u_2])~d\calO=0.
\end{align*}

The matrices \eqref{matrix1}-\eqref{matrix6} handle the spatial discretization aspect of the system. For temporal discretization, we specify a time step $\Delta t$ and step using Backward Euler:

    \begin{align}
    \begin{cases}
        M_f\alpha^{N+1}_1 + \Delta t K\alpha_1^{N+1} + \Delta t B_x\beta^{N+1} = M_f \alpha_1^N + \Delta t F_{1f}^{N+1},\\
        M_f\alpha^{N+1}_2 + \Delta t K\alpha_1^{N+2} + \Delta t B_y\beta^{N+1} = M_f \alpha_2^N + \Delta t F_{2f}^{N+1},\\
        B_x^T\alpha_1^{N+1} + B_y^T\alpha_2^{N+1} = 0,\\
        M_s\omega_2^{N+1} + \Delta t S\omega_1^{N+1} - \Delta t E_s^T\tilde{c}^{N+1} = M_s\omega_2^N + \Delta t F_s^{N+1},\\
        \omega_1^{N+1} - \Delta t \omega_2^{N+1} = \omega_1^N,\\
        E_s\omega_1^{N+1} = 0.
    \end{cases}\label{fullydiscFSI}
    \end{align}

   To capture the essential boundary conditions of \eqref{FSI}, we have that the degrees of freedom satisfy
    \begin{align}
        \begin{cases}
            \alpha|_S=0,\\
            \alpha|_\Omega = \omega_2,\\
            \omega_1^{\Lag}|_{\partial\Omega} = \omega_2^\Lag|_{\partial\Omega} = 0,\\
            \omega_1^\Herm|_{\partial\Omega} = \omega_2^{\Herm}|_{\partial\Omega} = 0.
        \end{cases}\label{discBCs}
    \end{align}

We require additionally that each of the boundary conditions \eqref{discBCs} is satisfied at each time step $N+1$. In particular, to retain the interesting dynamics between the fluid and plate, we require
\begin{align}
    u_{2, I}^{N+1}(N_i) = w^{N+1}_{2, I}(N_i)\label{velocitymatching}
\end{align}
for each node $N_i$ on the fluid boundary $\Omega$. Note that this constitutes a variational crime -- see \cite{SF73} -- in that the $y$ component of fluid velocity only equals the plate velocity at the nodes, not on the entire boundary $\Omega$. For fine meshes, such an approximation is sufficient (p. 186 of \cite{AB84}). 

Below are results for the smoothing of rough initial data, as pictured in Figures \ref{fig:plate-init} - \ref{fig:fluid-final}. Note that at the terminal time $T = 0.001,$ the fluid FEM approximation is apparently at the zero state. Since solutions of the FSI \eqref{FSI} are known to decay exponentially for given finite energy initial data -- see \cite{AB15} -- this quiescence is not unexpected.

\begin{figure}[h!]
\centering
\begin{multicols}{2}
\includegraphics[width=0.85\linewidth]{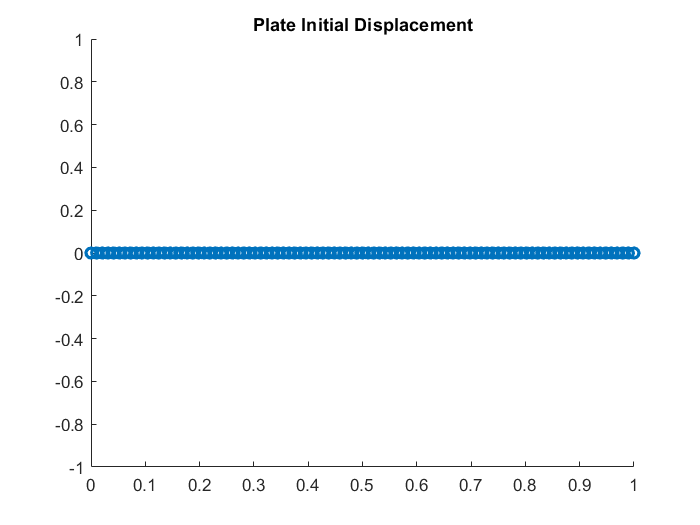}

\columnbreak

\includegraphics[width=0.85\linewidth]{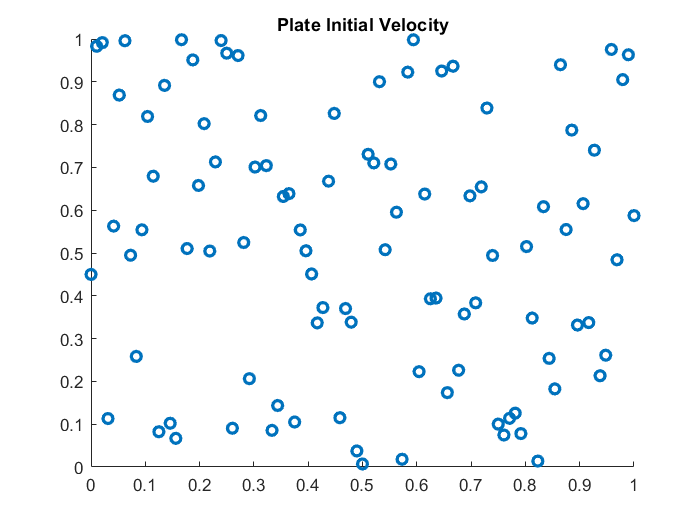}
\end{multicols}
\caption{Initial plate displacement and velocity}
\label{fig:plate-init}
\end{figure}

\begin{figure}
    \centering
    \begin{multicols}{2}
    \includegraphics[width=0.85\linewidth]{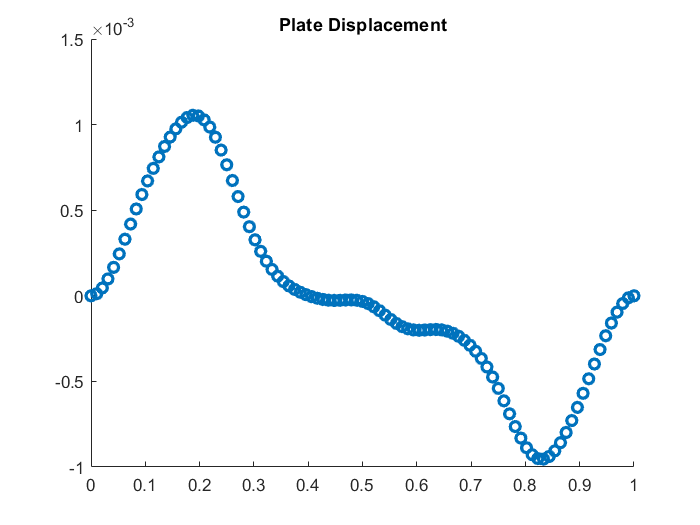}

    \columnbreak

    \includegraphics[width=0.85\linewidth]{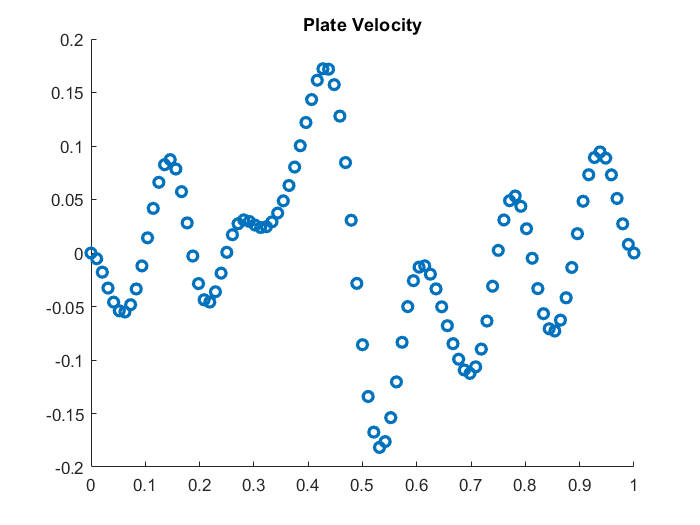}
    \end{multicols}

    \caption{Final plate displacement and velocity at $T = 0.001$}
    \label{fig:plate-final}
\end{figure}

\begin{figure}[h!]
\centering
\begin{multicols}{2}
\includegraphics[width=0.85\linewidth]{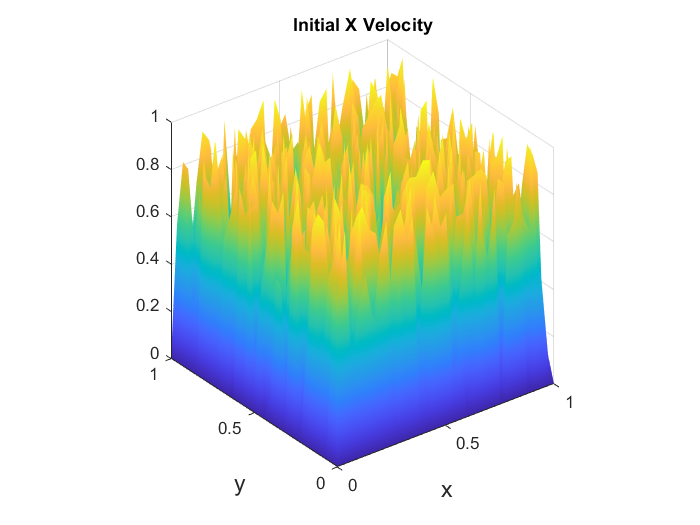}

\columnbreak

\includegraphics[width=0.85\linewidth]{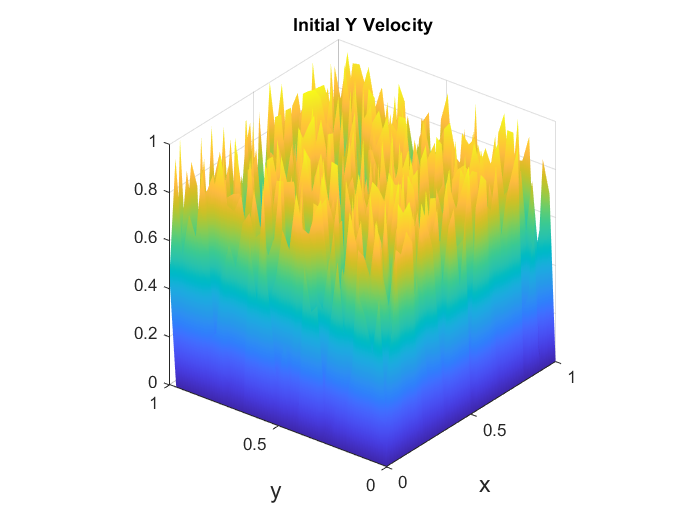}
\end{multicols}
\caption{Initial fluid velocities}
\label{fig:fluid-init}
\end{figure}

\begin{figure}[h]
    \centering
    \begin{multicols}{2}
    \includegraphics[width=0.85\linewidth]{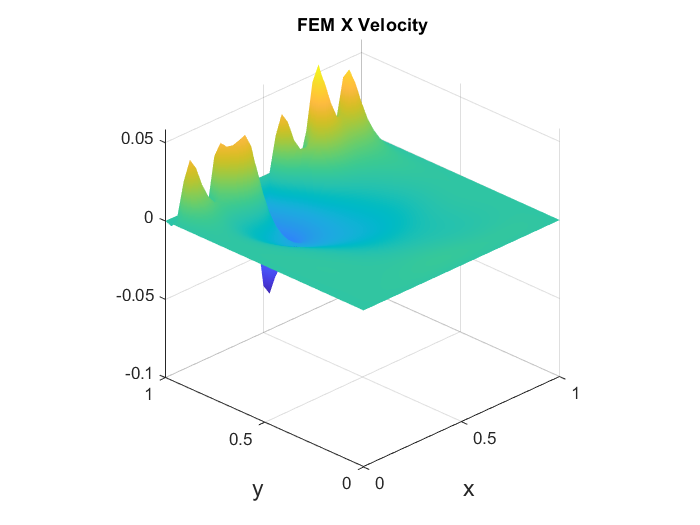}

    \columnbreak
    \includegraphics[width=0.85\linewidth]{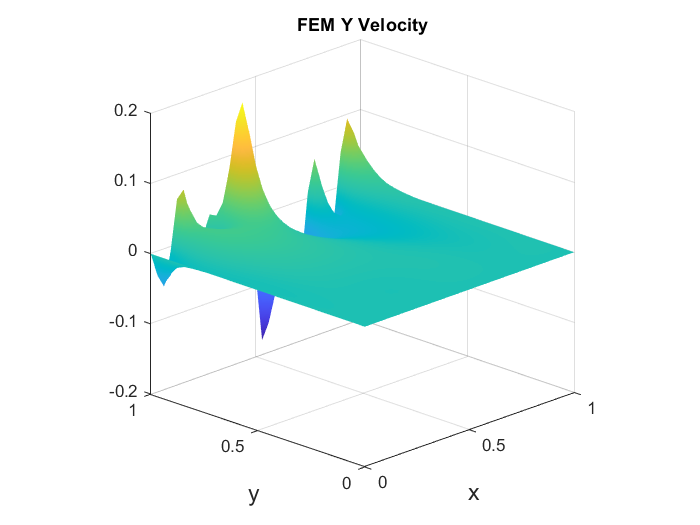}
    \end{multicols}

    \caption{Final fluid velocities at $T = 0.001$}
    \label{fig:fluid-final}
\end{figure}
\FloatBarrier

\section{Conclusion}
Theorem \ref{thm:mainResult} establishes a degree of smoothing on the system, in the sense of Gevrey. With this result in hand, drawing from the trajectories in structurally damped plates \cite{CT89}, \cite{CT90} and thermoelastic plates \cite{LT00}, \cite{LR95}, an appropriate notion of controllability for the FSI \eqref{FSI} is null controllability \cite{Z92}. This is the subject of an upcoming work. 

For now, the authors note that in contrast to thermoelastic plates, where all three dynamics evolve in the interior, the plate portion of the fluid structure system \eqref{FSI} evolves on the boundary. This makes the task of establishing controllability significantly more difficult as the trace operator is not appropriately bounded. There are also issues arising due to the presence of pressure. Partial results and a discussion in this direction are provided in the thesis \cite{M24}. 

\newpage
\bibliographystyle{spmpsci}
\bibliography{mybibliography.bib}

\end{document}